\colorlet{darkteal}{teal!70!black}
\theoremstyle{plain}
\newtheorem{proposition}{Proposition}
\newtheorem{theorem}[proposition]{Theorem}
\newtheorem{corollary}[proposition]{Corollary}
\newtheorem{lemma}[proposition]{Lemma}
\theoremstyle{definition}
\newtheorem{remark}[proposition]{Remark}
\DeclareMathOperator{\Homeo}{Homeo} 
\DeclareMathOperator{\Isom}{Isom} 
\DeclareMathOperator{\Sym}{Sym} 
\DeclareMathOperator{\Aut}{Aut} 
\DeclareMathOperator{\Fixp}{Fix} 
\DeclareMathOperator{\Fixa}{Fix} 
\DeclareMathOperator{\dom}{dom} 
\newcommand{\Contrucb}{\mathcal{C}_{\mathrm{rucb}}}
\newcommand{\Reels}{\mathbf{R}}
\newcommand{\Nat}{\mathbf{N}}
\newcommand{\Int}{\mathbf{Z}}
\renewcommand{\phi}{\varphi}
\newcommand{\clr}{\mathbf{L}_{\geq 0}}
\newcommand{\abs}[1]{\left| #1 \right|}
\newcommand{\CB}{\mathrm{CB}}
\newcommand{\propum}{(\textasteriskcentered)}
\begin{document}

\title[The homeomorphism group of the first uncountable ordinal]{The homeomorphism group\\ of the first uncountable ordinal}
\author{Maxime Gheysens}
\address{Institut für Geometrie, Technische Universität Dresden, 01062 Dresden, Germany}
\curraddr{Institut für Diskrete Mathematik und Algebra, Technische Universität Bergakademie Freiberg, 09596 Freiberg, Germany}
\email{maxime.gheysens@math.tu-freiberg.de}
\thanks{Work supported in part by the European Research Council Consolidator Grant no.~681207.}
\date{3 December 2019 (version 1), 30 November 2020 (current version)}
\subjclass[2010]{Primary 20F38; Secondary 54G12, 43A07}
\keywords{Scattered space, homeomorphism group, first uncountable ordinal, amenability, Roelcke-precompactness}

\begin{abstract}
 We show that the topology of pointwise convergence on scattered spaces is compatible with the group structure of their homeomorphism group. We then establish a few topological properties of the homeomorphism group of the first uncountable ordinal, such as amenability and Roelcke-precompactness.
\end{abstract}

\maketitle

\tableofcontents

It is easily shown that the topology of pointwise convergence on a discrete space $X$ is compatible with the group structure of the full permutation group $\Sym(X)$---when $X$ is countable, this actually gives rise to a much-studied Polish group. On more general topological spaces, there is no such compatibility for the homeomorphism group\footnote{An explicit counterexample for $\Homeo(\Reels^2)$ can be found for instance in \cite[X, \S~3, exerc.~14]{Bourbaki_TG_V}.}, unless we restrict our attention to equicontinuous subgroups (see~\cite[X, \S~3, n°~5]{Bourbaki_TG_V}).

The first part of this note shows that there is such a compatibility for the homeomorphism group of \emph{scattered} spaces (we recall the definition below). The first uncountable ordinal $\omega_1$, endowed with the order topology, is such a space, and we study its homeomorphism group in the second part of this note.

This group features an intriguing combination of topological properties. On the one hand, $\Homeo(\omega_1)$ may seem \enquote{tame}, as it enjoys such rigidity properties as \emph{amenability} and \emph{Roelcke-precompactness} (Theorems~\ref{th:amen} and~\ref{th:roelcke}, respectively). In particular, it fixes a point whenever it acts affinely on a nonempty compact convex set and it has bounded orbits whenever it acts isometrically on a metric space. On the other hand, this group may look unwiedly, as it is not Baire (Section~\ref{sec:baire}) and admits no nontrivial morphism to any metrisable group (Theorem~\ref{th:propum}).

We conclude this paper by two appendices of independent interest about the automorphism group of two other uncountable structures, which serve us as contrasting examples to $\Homeo(\omega_1)$. The first one shows that $\Sym(X)$ is Baire whatever the cardinality of the set $X$. The second one shows that, when $T$ is a regular tree of uncountable degree, $\Aut^+(T)$, the group of automorphisms of $T$ generated by the fixators of edges, also admits no nontrivial morphism to any metrisable group, despite having a more varied class of isometric actions on metric spaces than $\Homeo(\omega_1)$.

Our aim in this paper is to study the homeomorphism group of one of the most interesting scattered spaces, $\omega_1$, and we do so by leveraging its peculiar structure. In a companion paper \cite{Gheysens_dynscatt}, which can be read independently, we prove via another, more topological, approach, that amenability and Roelcke-precompactness hold actually for all zero-dimensional scattered spaces. Of course, this generality does not allow to retrieve the \enquote{uncountable features} of $\Homeo(\omega_1)$; in exchange, we get two results that are not covered here: the classification of all closed normal subgroups and the computation of the universal minimal flow \cite[\S~3]{Gheysens_dynscatt}.

\subsubsection*{Acknowledgements}

The author is thankful to Pierre de la Harpe and Nicolas Monod whose comments on an earlier draft led to numerous clarifications which hopefully improved the readability of this note. The referee's remarks also led to some elucidations and were instrumental in the resolution to write the companion paper.

\section{The pointwise convergence in scattered spaces}

A topological space $X$ is said \emph{scattered} if any nonempty subset $A$ of $X$ contains an isolated point (with respect to $A$)---in other words, $X$ does not contain any nonempty perfect subspace. The easiest nondiscrete examples are given by ordinals endowed with the order topology: any nonempty subset $A$ contains a minimum, which is isolated in $A$ for the order topology.

For a topological space $X$ and an ordinal $\alpha$, we write $X^{(\alpha)}$ for the $\alpha$-th derived subspace of $X$, defined inductively by $X^{(0)} = X$, $X^{(\alpha + 1)} = \{\text{limit points of }X^{(\alpha)}\}$, and $X^{(\lambda)} = \bigcap_{\alpha < \lambda} X^{(\alpha)}$ for $\lambda$ a limit ordinal. This decreasing sequence of subspaces of $X$ stabilises at some ordinal $\CB(X)$, the so-called \emph{Cantor--Bendixson rank} of $X$. Clearly, $X$ is scattered if and only if $X^{(\CB(X))} = \emptyset$. In that case, for any $x \in X$, there is a (unique) ordinal $\alpha$ such that $x \in X^{(\alpha)} \setminus X^{(\alpha + 1)}$.

Since scattered spaces are somehow \enquote{exhaustible by isolated points}, the following result is not surprising.

\begin{proposition}\label{prop:topscattpw}
	Let $X$ be a scattered space. On $\Homeo(X)$, the topology of pointwise convergence \emph{agrees} with the topology of discrete pointwise convergence (that is, pointwise convergence on the set $X$ endowed with the discrete topology).
\end{proposition}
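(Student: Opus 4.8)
The plan is to establish the equality of the two topologies by working directly with subbasic open sets. One inclusion is automatic: the topology of discrete pointwise convergence is always \emph{finer}, since an ordinary subbasic set $\{g \in \Homeo(X) : g(x) \in U\}$ (with $U$ open in $X$) equals $\bigcup_{y \in U}\{g : g(x) = y\}$ and is therefore open for the discrete topology. So everything will reduce to the converse, namely to showing that each discrete subbasic set
\[
 S(x,y) := \{g \in \Homeo(X) : g(x) = y\} \qquad (x, y \in X)
\]
is already open for the (ordinary) topology of pointwise convergence.

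The structural fact I would rely on is that every homeomorphism respects the Cantor--Bendixson stratification. Since the derived set is a topological invariant, one has $g(A') = (g(A))'$ for every $g \in \Homeo(X)$ and every $A \subseteq X$; taking $A = X$ and iterating by transfinite induction (using that a bijection commutes with arbitrary intersections to handle the limit stages) yields $g\bigl(X^{(\alpha)}\bigr) = X^{(\alpha)}$ for all ordinals $\alpha$. In particular, $g$ preserves the Cantor--Bendixson rank of each point.

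With this in hand I would fix $x, y \in X$ and argue as follows. If $S(x,y)$ is empty it is trivially open, so I may assume it contains some $g_0$; then $x$ and $y = g_0(x)$ share the same rank, say $\alpha$, so that $x, y \in X^{(\alpha)} \setminus X^{(\alpha + 1)}$. Because $y \notin X^{(\alpha + 1)}$, the point $y$ is not a limit point of $X^{(\alpha)}$, i.e.\ it is isolated in the subspace $X^{(\alpha)}$: there is an open $V \ni y$ in $X$ with $V \cap X^{(\alpha)} = \{y\}$. I then claim that
\[
 S(x,y) = \{g \in \Homeo(X) : g(x) \in V\},
\]
which exhibits $S(x,y)$ as a subbasic open set for the pointwise topology. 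The inclusion $\subseteq$ is clear since $y \in V$; conversely, if $g(x) \in V$ then, as $x \in X^{(\alpha)}$ and $g$ preserves $X^{(\alpha)}$, we get $g(x) \in V \cap X^{(\alpha)} = \{y\}$, whence $g(x) = y$.

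I expect the only real difficulty to be conceptual rather than computational. The weakness of pointwise convergence is that membership of $g(x)$ in an open neighbourhood of $y$ does not determine its value, whereas discrete convergence demands exact equality. The scattered hypothesis is precisely what bridges this gap: it ensures that each point is isolated within its own (homeomorphism-invariant) derived stratum, so that intersecting the open condition $g(x) \in V$ with the invariant set $X^{(\alpha)}$ upgrades it to the equality $g(x) = y$.
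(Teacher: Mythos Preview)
Your argument is correct and rests on exactly the same idea as the paper's proof: homeomorphisms preserve each derived set $X^{(\alpha)}$, and each stratum $X^{(\alpha)}\setminus X^{(\alpha+1)}$ is discrete as a subspace, so membership in a suitable open neighbourhood forces equality. The only difference is packaging---the paper phrases this via nets (pointwise convergence of $g_j(x)$ to $g(x)$ inside the discrete stratum forces eventual equality), whereas you unwrap the same observation at the level of subbasic open sets; the two presentations are interchangeable.
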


\begin{proof}
	Since the former topology is obviously weaker than the latter one, we only need to show that if a net $(g_j)$ of homeomorphisms of $X$ converges pointwise to some homeomorphism $g$, then it also converges pointwise with respect to the discrete topology. That is, we need to show that, for any $x \in X$, $(g_j (x))$ is eventually equal to $g(x)$. Let $\alpha$ be the (unique) ordinal such that $x \in X^{(\alpha)} \setminus X^{(\alpha + 1)}$. Since homeomorphisms preserve the derived subspaces of $X$, $g_j(x)$ as well as $g (x)$ also belong to $X^{(\alpha)} \setminus X^{(\alpha + 1)}$. But, by construction, the latter is a discrete subspace of $X$. Therefore the convergence of $(g_j (x))$ to $g(x)$ is only possible if $(g_j (x))$ is eventually equal to $g(x)$.
\end{proof}

\begin{corollary}\label{cor:homeoscatt}
	If $X$ is a scattered space, then $\Homeo(X)$, endowed with the topology of pointwise convergence, is a topological group admitting an identity neighbourhood basis made of open subgroups (i.e.~a \emph{nonarchimedean} group).
\end{corollary}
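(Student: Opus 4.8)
The plan is to replace the topology of pointwise convergence by the discrete one, which is legitimate by Proposition~\ref{prop:topscattpw}, and then to produce the required basis of open subgroups explicitly from the pointwise fixators of finite sets. For a finite subset $F \subseteq X$ I would set
\[
 U_F = \{g \in \Homeo(X) : g(x) = x \text{ for all } x \in F\},
\]
the pointwise fixator of $F$. Each $U_F$ is visibly a subgroup, and it is open for the discrete pointwise topology since it is the finite intersection $\bigcap_{x \in F}\{g : g(x) = x\}$ of preimages of singletons under the (continuous) evaluation maps. Moreover $U_F \cap U_{F'} = U_{F \cup F'}$ with $F \cup F'$ still finite, so the family $\{U_F\}$ is downward directed; and since a basic identity neighbourhood for the discrete pointwise topology has the shape $\{g : g(x_i) = x_i,\ i = 1,\dots,n\} = U_{\{x_1,\dots,x_n\}}$, this family is already a neighbourhood basis at the identity.

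What remains is to see that these subgroups do organise into a group topology, for which I would appeal to the classical criterion describing the fundamental systems of identity neighbourhoods of a topological group. For a downward-directed family of subgroups the conditions on products and inverses are automatic, because $U_F U_F = U_F = U_F^{-1}$; the only axiom that genuinely uses the structure of $\Homeo(X)$ is conjugation-invariance, namely that for each $U_F$ and each $g$ there is a member of the family contained in $g^{-1} U_F g$. This is the crux of the argument, and I expect it to be the one point worth spelling out, although it is mild: a direct computation gives the conjugation formula $g\, U_W\, g^{-1} = U_{g(W)}$, so the choice $W = g^{-1}(F)$---finite because $F$ is---produces $g\, U_W\, g^{-1} = U_F$ exactly.

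It then only remains to match the group topology so obtained with the discrete pointwise topology (and hence, via Proposition~\ref{prop:topscattpw}, with the topology of pointwise convergence). The two agree at the level of basic open sets: a nonempty basic open set $\{g : g(x_i) = y_i\}$ of the discrete pointwise topology, after fixing one of its elements $g_0$, is exactly the left coset $g_0\, U_{\{x_1,\dots,x_n\}}$, and conversely every such left coset arises this way. Since each $U_F$ is thus an open subgroup and the $U_F$ form a neighbourhood basis at the identity, $\Homeo(X)$ is nonarchimedean, which is the assertion. Beyond the conjugation identity, nothing here is specific to scattered spaces---the same reasoning shows that any permutation group carries a nonarchimedean topology of discrete pointwise convergence---and the role of scatteredness is confined to Proposition~\ref{prop:topscattpw}, which lets us phrase the conclusion for the genuine topology of pointwise convergence.
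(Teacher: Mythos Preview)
Your proposal is correct and follows essentially the same route as the paper: after invoking Proposition~\ref{prop:topscattpw}, one takes the fixators $U_F$ of finite sets as the identity neighbourhood basis. The paper's own justification is a single sentence to that effect, leaving implicit the well-known fact that the discrete pointwise topology on any permutation group is a group topology with these fixators as basic neighbourhoods; you have simply spelled out those details (the conjugation identity $g\,U_W\,g^{-1}=U_{g(W)}$ and the identification of basic open sets with cosets), which is entirely reasonable but not a different approach.
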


Indeed, we can take the subgroups fixing a finite set as basic neighbourhoods. The group $\Homeo(X)$ is then in particular a totally disconnected group.

Let us mention, however, that this compatibility with the group structure does \emph{not} ensure that the action map $\Homeo(X) \times X \rightarrow X$ is continuous (see Remark~\ref{rem:actionnotcontinuous}).

\begin{remark}\label{rem:pwunifcomp}
	Being scattered is of course not a necessary condition for the topology of pointwise convergence to be compatible with the group structure of $\Homeo(X)$. To say nothing about spaces with very few homeomorphisms (see e.g.~\cite{vanMill_1983}), the topology of pointwise convergence on $\Homeo(\Reels)$ happens to agree with the topology of uniform convergence on compact subsets\footnote{Indeed, let $a < b$ and $c < d$ be points in $\Reels$ and $W$ be the set of homeomorphisms that send the compact interval $[a, b]$ into the open one $]c, d[$. We need to show that $W$ is open for the topology of pointwise convergence. For any $a' \neq b' \in ]c, d[$, let $\delta$ be the minimum of the four possible distances between $a', b'$ and $c, d$. By the intermediate value theorem, the image of $[a, b]$ under a homeomorphism $g$ is an interval whose extremities are $g(a)$ and $g(b)$. Therefore, the set $U({a', b', \delta})$ of all homeomorphisms such that $\abs{g(a) - a'} < \delta, \abs{g(b) - b'} < \delta$, which is open for the topology of pointwise convergence, is included in $W$. Moreover, $W$ is the union of all such $U({a', b', \delta})$, when $a'\neq b'$ range among $]c, d[$.

This one-dimensional argument would work as well for locally finite graphs or long rays --- more generally, for locally compact spaces in which compact sets can be covered by finitely many compact intervals, so that sets like the above $W$ form a subbasis of the topology of uniform convergence on compact subsets, cf.~\cite[X, \S~3, n°~4, Rem.~2]{Bourbaki_TG_V}.}, which is compatible with its group structure by \cite[Th.~4]{Arens_1946}. 
\end{remark}

\section{\texorpdfstring{$\Homeo(\omega_1)$}{Homeo(omega 1)}}

\epigraph{
	Les chiffres ne s'arrêtent jamais. Peut-être parce qu'ils ne mènent nulle part.\footnotemark}{A. \textsc{Salacrou}, \textsl{L'Archipel Lenoir} ou \textsl{Il ne faut pas toucher aux choses immobiles}, 2\textsuperscript{nde} partie.}
	\footnotetext{\textit{Numbers never end. Maybe because they lead nowhere.}}

We henceforth focus on the space $X = \omega_1$, namely the first uncountable ordinal endowed with the order topology, and its homeomorphism group $G = \Homeo(\omega_1)$, endowed with the topology of pointwise convergence. (Some of the results are valid for other ordinals, see~Remark~\ref{rem:gen}.) Recall that $X$ is the linearly ordered set of all countable ordinals; a basis for its topology is given by intervals $]\alpha, \beta[$ of all ordinals $\gamma$ such that $\alpha < \gamma < \beta$ (along with the set $\{0\}$). The space $X$ is a paragon of topological pathology. For instance, it is first-countable, locally compact, normal and sequentially compact, yet it is not separable, not perfectly normal, not compact, not metrisable. Many of its fascinating properties are conveniently gathered at Example~42 of \cite{SS_1978} (where it is written as $[0, \Omega)$); the interested reader will also find more general properties of linearly ordered spaces as scattered exercises in many standard textbooks on topology (see in particular Problem 1.7.4. and the list therein in \cite{Engelking_GT}).

However, for the purpose of this mostly self-contained note, we only need the following easy facts about $X$ (in which $\alpha$ denotes a countable ordinal):
\begin{itemize}
	\item Since $X$ is well-ordered, its topology is Hausdorff and scattered. By Corollary~\ref{cor:homeoscatt}, the group $\Homeo(\omega_1)$ is thus a topological group for the topology of pointwise convergence.
	\item Intervals of the form $[0, \alpha] = [0, \alpha + 1[$ are both closed and open. In particular, any homeomorphism of $[0, \alpha]$ can be extended as an homeomorphism of $X$ fixing pointwise the interval $[\alpha + 1, \omega_1[$. (See Proposition~\ref{prop:strat} below for a kind of converse statement.)
	\item $X$ is closed by taking the supremum of countably many points, since such a supremum is still a countable ordinal (the ordinal $\omega_1$ is said to have \emph{uncountable cofinality}).
	\item By induction on $\alpha$, the ordinal $\omega^\alpha$ belongs to the derived subset $X^{(\alpha)}$ but not to $X^{(\alpha + 1)}$. (We write $\omega$ for the first infinite ordinal, namely the set of all finite ordinals.) Conversely, any point $x \neq 0$ of $X^{(\alpha)} \setminus X^{(\alpha + 1)}$ can be written as $x = x' + \omega^\alpha$ for some $x' < x$ (this follows from Cantor's normal form for ordinals, see e.g.~\cite[2.26]{Jech_2003}).
\end{itemize}

Section~\ref{sec:fphomeo} contains some results about the dynamics of individual homeomorphisms of $\omega_1$. We then prove some results towards the \enquote{wild} nature of $\Homeo(\omega_1)$, regarding its quotients in Section~\ref{sec:quotients} and its lack of Baire property in Section~\ref{sec:baire}. Its \enquote{tame} nature is investigated in Sections~\ref{sec:roelcke} (Roelcke-precompactness) and ~\ref{sec:amen} (amenability).

\subsection{Fixed points and invariant subspaces for homeomorphisms}\label{sec:fphomeo}

Let us recall that a subset $A$ of $\omega_1$ is \emph{unbounded} (or \emph{cofinal}) if for any $\alpha < \omega_1$, there is $\beta \in A$ such that $\beta \geq \alpha$. A well-known fundamental property of homeomorphisms of $\omega_1$ is their large fixed-point set, more precisely:

\begin{proposition}\label{prop:fixclosunb}
	Let $g\colon \omega_1 \rightarrow \omega_1$ be a continuous map with countable fibers. The set $\Fixp(g)$ of fixed points of $g$ is closed and unbounded.
\end{proposition}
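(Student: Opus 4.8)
The plan is to prove the two assertions—closedness and unboundedness of $\Fixp(g)$—separately, as they rely on different properties of $\omega_1$.

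**Closedness.** The set $\Fixp(g)$ is closed for a purely formal reason: $\omega_1$ is Hausdorff and $g$ is continuous, so $\Fixp(g)$ is the equaliser of $g$ and the identity map. Concretely, I would argue that if $\lambda$ is a limit of fixed points, say $\lambda = \sup_{\beta < \lambda}\beta$ with each such $\beta$ fixed on a cofinal set below $\lambda$, then by continuity $g(\lambda) = \sup g(\beta) = \sup \beta = \lambda$. This step is routine and uses only continuity together with the fact that suprema of the relevant sets compute the limit points in the order topology.

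**Unboundedness.** This is the substantive part and is where the countable-fiber hypothesis and the uncountable cofinality of $\omega_1$ enter. The plan is to fix an arbitrary $\alpha < \omega_1$ and produce a fixed point $\geq \alpha$. The idea is to build an increasing sequence of countable ordinals whose supremum is a fixed point. Starting from $\alpha_0 = \alpha$, I would like to define $\alpha_{n+1}$ so as to \enquote{absorb} the image of everything below $\alpha_n$: set $\alpha_{n+1}$ to be an ordinal strictly above both $\alpha_n$ and the supremum of $g([0,\alpha_n])$. The key point is that $\alpha_{n+1}$ is still countable. For this one needs that $g([0,\alpha_n])$ is a countable set with countable supremum; since $[0,\alpha_n]$ is countable, its image under $g$ is countable, and a countable set of countable ordinals has a countable supremum (uncountable cofinality of $\omega_1$). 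Then $\lambda = \sup_n \alpha_n$ is countable for the same reason, hence $\lambda \in \omega_1$, and $\lambda \geq \alpha$.

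It remains to check $g(\lambda) = \lambda$, and it is here that I must be careful, because the naive construction only controls the \emph{image} of initial segments, giving $g(\lambda) \leq \lambda$, not equality. This is the step I expect to be the main obstacle. To get the reverse inequality I would instead interleave the construction so that $\alpha_{n+1}$ dominates not only $\sup g([0,\alpha_n])$ but also a point whose image exceeds $\alpha_n$; this is where the countable-fiber hypothesis is essential, since it guarantees that the preimage $g^{-1}([0,\alpha_n])$ is a countable set (a countable union of countable fibers), hence bounded by some countable ordinal. Choosing $\alpha_{n+1}$ above both $\sup g([0,\alpha_n])$ and $\sup g^{-1}([0,\alpha_n])$ yields, at the limit, both $g(\lambda) \leq \lambda$ and $g(\lambda) \geq \lambda$ by continuity, so $g(\lambda) = \lambda$. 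Thus the fiber hypothesis is used precisely to keep the preimages bounded, which is what forces the supremum to be a genuine fixed point rather than merely a subinvariant one.
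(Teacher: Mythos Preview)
Your proposal is correct and follows essentially the same back-and-forth argument as the paper: build an increasing sequence by alternately dominating forward images and preimages of initial segments (the countable-fiber hypothesis keeping the latter bounded), then use continuity at the supremum to sandwich $g(\lambda)$ between the two limits. The only cosmetic difference is that the paper tracks just $g(\beta_n)$ rather than the full $\sup g([0,\alpha_n])$, and phrases the conclusion via the single inequality chain $\beta_n \leq g(\beta_{n+1}) \leq \beta_{n+2}$, but this is the same argument.
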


\begin{proof}
	The set $\Fixp(g)$ is obviously closed since $g$ is continuous and $\omega_1$ is Hausdorff. Let $\alpha < \omega_1$. Define $\beta_0 = \alpha$ and, by induction, 
	\begin{equation}\label{eq:indcount}
		\beta_{n + 1} = \left(\sup \{\beta_n, g(\beta_n) \} \cup g^{-1} ([0, \beta_n]) \right) + 1
	\end{equation}
	(which is possible since $g^{-1} [0, \beta_n]$ is countable). Let $\beta$ be the limit of the increasing sequence $(\beta_n)$. By construction, $\beta_n \leq g(\beta_{n + 1}) \leq \beta_{n + 2}$, hence $\beta$ is also the limit of the sequence $(g(\beta_n))$. By continuity, this implies that $\beta$ is fixed by $g$. Since $\beta > \alpha$ and $\alpha$ was arbitrary, $\Fixp(g)$ is unbounded.
\end{proof}

Since any countable intersection of closed unbounded subsets of $\omega_1$ is still closed and unbounded (hence nonempty) \cite[8.3]{Jech_2003}, we get in particular that any countable family of homeomorphisms have uncountably many common fixed points. This observation can be generalised as follows. Let us say that a subset $A \subseteq \Homeo(\omega_1)$ is \emph{weakly precompact} if $A x$ is countable (that is, bounded) for any $x \in \omega_1$ (equivalently, $A D$ is countable for any countable subset $D$ of $\omega_1$). Obviously, countable sets are weakly precompact, and this property is stable by countable unions, subsets and translations. Moreover, if $A$ and $B$ are weakly precompact, then so are $A^{-1}$, $AB$, and the closure of $A$. In particular, the closed subgroup generated by a weakly precompact set is still weakly precompact. In addition, compact subsets are weakly precompact: indeed, if $K$ is not weakly precompact, there exists some $x \in \omega_1$ such that $K x$ is unbounded. That is, for any $\alpha \in \omega_1$, we can find $g_\alpha \in K$ such that $g_\alpha (x) \geq \alpha$. It is therefore impossible to extract a converging subnet of the net $(g_\alpha)$, hence $K$ cannot be compact.

The point of this notion is that weakly precompact subsets of $\Homeo(\omega_1)$ are \enquote{small} in the following sense.

\begin{proposition}
	Let $A$ be a weakly precompact subset of $\Homeo(\omega_1)$. Then the set of common fixed points $\Fixp(A) = \bigcap_{g \in A} \Fixp (g)$ is closed and unbounded.
\end{proposition}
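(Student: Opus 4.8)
The plan is to reduce the statement to the classical fact, already invoked in the excerpt, that a countable intersection of closed unbounded subsets of $\omega_1$ is again closed and unbounded. The first observation is that $\Fixp(A)$ is automatically closed, since it is an intersection (arbitrary, not just countable) of the closed sets $\Fixp(g)$. The real content is unboundedness, and here the defining property of weak precompactness is exactly what lets us run a diagonal closing-off argument analogous to the proof of Proposition~\ref{prop:fixclosunb}, but now controlling the \emph{entire} family $A$ simultaneously rather than a single map.

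First I would fix an arbitrary $\alpha < \omega_1$ and aim to produce a common fixed point $\beta > \alpha$. The key point is that each homeomorphism $g \in A$ has countable fibers (being a bijection), so Proposition~\ref{prop:fixclosunb} applies to every element of $A$; however, I cannot simply intersect uncountably many $\Fixp(g)$, since that operation does not preserve unboundedness in general. Instead I would exploit weak precompactness directly. The idea is to build an increasing sequence $(\beta_n)$ of countable ordinals by a closing-off procedure: set $\beta_0 = \alpha$ and let
\begin{equation*}
	\beta_{n+1} = \left(\sup\nolimits_{g \in A} \{g(\beta_n), g^{-1}(\beta_n)\}\right) + 1.
\end{equation*}
The crucial justification is that this supremum is a countable ordinal: the set $A\beta_n = \{g(\beta_n) : g \in A\}$ is countable because $A$ is weakly precompact, and likewise $A^{-1}\beta_n$ is countable since $A^{-1}$ is weakly precompact (a property noted in the excerpt to be stable under inverses). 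Hence $\beta_{n+1} < \omega_1$ at each step, and the sequence stays inside $\omega_1$.

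Next I would let $\beta = \sup_n \beta_n$, which lies in $\omega_1$ because $\omega_1$ has uncountable cofinality, and check that $\beta$ is fixed by every $g \in A$. For each $g \in A$, continuity gives $g(\beta) = g(\sup_n \beta_n) = \sup_n g(\beta_n)$; and by construction each $g(\beta_n)$ satisfies $\beta_n \le g(\beta_n) < \beta_{n+1}$, whence the sequence $(g(\beta_n))$ is cofinal in $\beta$ and $g(\beta) = \beta$. (One must remember that a continuous self-map of $\omega_1$ need not be monotone, so I would lean on the squeezing $\beta_n \le g(\beta_n) \le \beta_{n+1}$ coming from the closing-off, rather than on order-preservation, to conclude $\sup_n g(\beta_n) = \beta$.) Since $\beta > \alpha$ and $\alpha$ was arbitrary, $\Fixp(A)$ is unbounded, and being closed as well, it is closed and unbounded.

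The main obstacle, and the point deserving the most care, is ensuring that the supremum defining $\beta_{n+1}$ genuinely stays countable; this is precisely where weak precompactness of both $A$ and $A^{-1}$ is essential, and it is the one place where the hypothesis is used in an irreducible way. The rest of the argument is a faithful transcription of the single-map proof of Proposition~\ref{prop:fixclosunb} in which the fiber-countability input is replaced by the orbit-countability guaranteed by weak precompactness. An alternative, slicker route would be to note that weak precompactness allows one to reduce to a countable subfamily whose common fixed-point set is cofinal and then pass to the full intersection; but since an uncountable intersection of club sets can fail to be club, I would prefer the direct closing-off argument above, which handles the whole family at once.
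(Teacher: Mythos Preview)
Your overall strategy is exactly the paper's: a closing-off argument that upgrades the inductive step of Proposition~\ref{prop:fixclosunb} by using weak precompactness of $A$ and $A^{-1}$ in place of the single-map fiber-countability. However, your inductive definition is too weak and the squeezing you invoke does not follow from it.

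You set $\beta_{n+1} = \bigl(\sup_{g\in A}\{g(\beta_n),\,g^{-1}(\beta_n)\}\bigr)+1$ and then assert that ``by construction each $g(\beta_n)$ satisfies $\beta_n \le g(\beta_n) < \beta_{n+1}$''. The upper bound is fine, but the lower bound $\beta_n \le g(\beta_n)$ is not justified: knowing $g^{-1}(\beta_{n-1})<\beta_n$ says nothing about $g(\beta_n)$, because you have only controlled the preimage of the single point $\beta_{n-1}$, not of the whole initial segment. Concretely, take $A=\{g\}$ where $g$ is the $3$-cycle $(0\,1\,2)$ on the isolated points $0,1,2$ and the identity elsewhere, and start at $\beta_0=2$. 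Then $g(2)=0$, $g^{-1}(2)=1$, so $\beta_1=\max(0,1)+1=2$, and the sequence is constant at $2$; its supremum $\beta=2$ is not fixed by $g$. In particular your sequence need not be increasing and $\beta$ need not be a limit ordinal, so the continuity step $g(\beta)=\sup_n g(\beta_n)$ also breaks down.

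The paper's fix is exactly the one suggested by the proof of Proposition~\ref{prop:fixclosunb}: replace $A^{-1}\{\beta_n\}$ by $A^{-1}[0,\beta_n]$ in the recursion, i.e.\ set
\[
\beta_{n+1}=\bigl(\sup\, A\beta_n \cup A^{-1}[0,\beta_n]\bigr)+1.
\]
Weak precompactness still guarantees this supremum is countable (since $A^{-1}[0,\beta_n]=A^{-1}D$ for the countable set $D=[0,\beta_n]$). Now if $g(\beta_{n+1})\le\beta_n$ then $\beta_{n+1}\in g^{-1}([0,\beta_n])\subseteq A^{-1}[0,\beta_n]$, contradicting $\beta_{n+1}>\sup A^{-1}[0,\beta_n]$. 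Hence $\beta_n< g(\beta_{n+1})<\beta_{n+2}$, which is the correct squeezing (with a shift of index) and yields $g(\beta)=\beta$ as in the single-map case.
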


\begin{proof}
	The proof is completely similar to that of Proposition~\ref{prop:fixclosunb}, replacing the inductive definition \eqref{eq:indcount} thereof by
	\begin{equation*}
		\beta_{n + 1} = \left(\sup A \beta_n \cup A^{-1} [0, \beta_n]\right) + 1,
	\end{equation*}
	the set of which we take the supremum being countable thanks to the weak precompactness of $A$ (and $A^{-1}$).
\end{proof}

In particular, Polish or $\sigma$-compact subgroups of $\Homeo(\omega_1)$ have uncountably many common fixed points.

\begin{remark}
	Since a basis of identity neighbourhoods of $G$ is given by fixators of finite sets, we can equivalently say that a subset $A$ is weakly precompact if for any identity neighbourhood $U$, there is a countable subset $D \subseteq G$ such that $A \subseteq D U$. The same definition with \enquote{countable} replaced by \enquote{finite} gives precompact subsets, hence the name.
\end{remark}

We now prove another useful property of homeomorphisms of $\omega_1$, namely that $\omega_1$ can be \enquote{stratified} into invariant subsets.

\begin{proposition}\label{prop:strat}
	Let $g\colon \omega_1 \rightarrow \omega_1$ be a continuous map (respectively, a homeomorphism). The set $S'_g$ (resp., $S_g$) of all $\alpha \in \omega_1$ such that $g([0, \alpha]) \subseteq [0, \alpha]$ (resp., $g([0, \alpha]) = [0, \alpha]$) is closed and unbounded.
\end{proposition}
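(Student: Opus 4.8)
The plan is to deduce the homeomorphism statement from the continuous one, and to establish the continuous statement by the very same closed-and-unbounded mechanism already used for Proposition~\ref{prop:fixclosunb}. For the reduction, note that for a bijection $g$ one has $g([0,\alpha]) = [0,\alpha]$ if and only if both $g([0,\alpha]) \subseteq [0,\alpha]$ and $g^{-1}([0,\alpha]) \subseteq [0,\alpha]$: the first inclusion is part of the definition, and applying $g$ to the second inclusion together with surjectivity yields $[0,\alpha] = g(g^{-1}([0,\alpha])) \subseteq g([0,\alpha])$, the reverse inclusion. Hence $S_g = S'_g \cap S'_{g^{-1}}$. As a homeomorphism and its inverse are both continuous, and as a finite (indeed countable) intersection of closed unbounded subsets of $\omega_1$ is again closed and unbounded (recalled just after Proposition~\ref{prop:fixclosunb}), it suffices to prove that $S'_g$ is closed and unbounded for every continuous $g$.

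For unboundedness I would run the closing-off argument of Proposition~\ref{prop:fixclosunb}. Fix $\alpha < \omega_1$, put $\beta_0 = \alpha$ and inductively
\begin{equation*}
	\beta_{n+1} = \left( \sup g([0,\beta_n]) \right) + 1 ,
\end{equation*}
which is a countable ordinal since $[0,\beta_n]$ is countable and so is its image. This gives $\beta_n < \beta_{n+1}$ and $g([0,\beta_n]) \subseteq [0,\beta_{n+1}]$. Let $\beta = \sup_n \beta_n > \alpha$, a limit ordinal. For $\delta < \beta$ we have $\delta < \beta_n$ for some $n$, whence $g(\delta) \leq \beta_{n+1} < \beta$; and for the endpoint $\delta = \beta$ continuity will force $g(\beta) \leq \sup_{\delta < \beta} g(\delta) \leq \beta$, as explained below. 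Thus $g([0,\beta]) \subseteq [0,\beta]$, i.e.\ $\beta \in S'_g$, so $S'_g$ is unbounded.

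The one genuinely topological point, and the step I expect to require the most care, is the behaviour at a limit ordinal, needed both for the endpoint above and for closedness. I would first record the upper-semicontinuity fact that a continuous $g \colon \omega_1 \to \omega_1$ satisfies $g(\gamma) \leq \sup_{\delta < \gamma} g(\delta)$ for every limit ordinal $\gamma$: writing $s = \sup_{\delta < \gamma} g(\delta)$, if one had $g(\gamma) > s$, then $]s, g(\gamma)+1[$ would be an open neighbourhood of $g(\gamma)$ meeting no $g(\delta)$ with $\delta < \gamma$, yet its $g$-preimage is a neighbourhood of $\gamma$ and so contains a tail $]\eta, \gamma]$, which for $\eta < \delta < \gamma$ gives $g(\delta) > s$, a contradiction.

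With this lemma in hand, closedness follows from the standard criterion that $C \subseteq \omega_1$ is closed precisely when every limit ordinal $\gamma$ with $C \cap \gamma$ cofinal in $\gamma$ already lies in $C$. So let $\gamma$ be such a limit point of $S'_g$. Every $\delta < \gamma$ lies below some $\alpha \in S'_g \cap \gamma$, whence $g(\delta) \leq \alpha < \gamma$; applying the lemma then gives $g(\gamma) \leq \sup_{\delta < \gamma} g(\delta) \leq \gamma$. Therefore $g([0,\gamma]) \subseteq [0,\gamma]$, that is $\gamma \in S'_g$, proving $S'_g$ closed. Combined with the reduction of the first paragraph, this completes both assertions.
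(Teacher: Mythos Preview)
Your proof is correct and follows essentially the same route as the paper: the reduction $S_g = S'_g \cap S'_{g^{-1}}$ and the closing-off iteration for unboundedness are identical in spirit (the paper writes $\alpha_{n+1} = \sup g([0,\alpha_n])$ without the $+1$, so it must split into the cases where $\alpha$ is a limit ordinal or not, whereas your $+1$ forces the limit case). The paper actually leaves closedness of $S'_g$ unproved, so your explicit upper-semicontinuity lemma and the closedness paragraph make your argument more complete than the original.
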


\begin{proof}
	We only need to prove the statement about continuous maps since $S_g = S'_g \cap S'_{g^{-1}}$ for bijective maps. Let $\beta \in \omega_1$ and define by induction $\alpha_n$ via $\alpha_0 = \beta$ and $\alpha_{n + 1} = \sup g([0, \alpha_n])$. Define moreover $\alpha = \sup \alpha_n$ and let us show that $\alpha \in S'_g$. Let $\gamma < \alpha$. By definition, there is some $n$ such that $\gamma \leq \alpha_n$. Hence $g(\gamma) \in g([0, \alpha_n])$ and $g(\gamma) \leq \alpha_{n + 1} \leq \alpha$. This shows that $g([0, \alpha[) \subseteq [0, \alpha]$. If $\alpha$ is a limit ordinal, then $g(\alpha) \in [0, \alpha]$ by continuity of $g$. If it is not, then there must exist some $n$ such that $\alpha = \alpha_n$, hence $g(\alpha) \leq \alpha_{n + 1} \leq \alpha$. In both cases, $g([0, \alpha]) \subseteq [0, \alpha]$.
\end{proof}

\subsection{Quotients}\label{sec:quotients}

Since derived subspaces are preserved by any homeomorphism, $\Homeo(\omega_1)$ has uncountably many normal closed subgroups, namely the fixators of $\omega_1^{(\alpha)}$ for any countable ordinal $\alpha$. Yet none of its quotients can be \enquote{small}, for instance metrisable. The following simple observation will make the proofs easier.

\begin{lemma}\label{lem:invdense}
	Let $A$ be a subset of $\Homeo(\omega_1)$ which is invariant by conjugation and contains the fixator $\Fixa\{x_i\}$ of some countable family of points $\{x_i\}$. Then $A$ is dense in $\Homeo(\omega_1)$.
\end{lemma}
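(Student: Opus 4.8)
The plan is to show that the closure of $A$ contains an arbitrary homeomorphism $g$, i.e. that for every basic identity neighbourhood—the fixator $\Fixa\{y_1, \dots, y_n\}$ of a finite set—the coset $g \cdot \Fixa\{y_1,\dots,y_n\}$ meets $A$. So I would fix a homeomorphism $g$ and a finite set $F = \{y_1, \dots, y_n\} \subseteq \omega_1$, and try to produce an element $a \in A$ that agrees with $g$ on $F$ (equivalently, $g^{-1}a \in \Fixa(F)$, so $a \in g\Fixa(F)$).

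\smallskip

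The key idea is to exploit conjugation-invariance together with the hypothesis that $A$ contains the fixator of the countable set $\{x_i\}$. First I would note that conjugating $\Fixa\{x_i\}$ by a homeomorphism $h$ yields $\Fixa\{h(x_i)\}$, so $A$ contains the fixator of \emph{any} homeomorphic image of $\{x_i\}$, and more generally (using that $A$ is closed under the relevant operations implicitly, or just by taking a single well-chosen conjugate) I have a rich supply of fixators inside $A$. The crucial point is that $\{x_i\}$ is countable while $F$ is finite, and $\omega_1$ has plenty of room: I would like to find a homeomorphism $h$ carrying $\{x_i\}$ off to the side, away from where the action of $g$ on $F$ needs to happen. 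Concretely, the strategy is to write $g = a \cdot f$ where $f \in \Fixa(F)$ and $a$ is a conjugate of an element of the given fixator, or dually to build $a \in A$ directly agreeing with $g$ on $F$.

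\smallskip

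The cleanest route I expect: choose a countable initial segment $[0,\gamma]$ (with $\gamma$ a countable ordinal large enough that $F \subseteq [0,\gamma]$ and $g([0,\gamma]) = [0,\gamma]$, which is possible by Proposition~\ref{prop:strat} since $S_g$ is unbounded). Then conjugate so that the countable set $\{x_i\}$ is pushed entirely beyond $\gamma$, into $]\gamma, \omega_1[$; since $\{x_i\}$ is countable and $\omega_1$ has uncountable cofinality, there is a countable ordinal $\delta > \gamma$ bounding a suitable rearrangement, and one can use the clopen structure of initial segments to build a homeomorphism realising this. Conjugating the given fixator by such a homeomorphism produces a subgroup inside $A$ whose elements fix $[0,\gamma]$ pointwise, hence in particular fix $F$. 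I would then take the element $a$ of $A$ that coincides with $g$ on $[0,\gamma]$ (using that $g$ preserves this clopen initial segment) and is chosen to fix the shifted copy of $\{x_i\}$ elsewhere; such $a$ lies in $A$ by the conjugation argument and agrees with $g$ on $F$, giving $a \in g\Fixa(F)$.

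\smallskip

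The main obstacle I anticipate is the bookkeeping in this last step: I must simultaneously arrange that $a$ matches $g$ on the finite set $F$ \emph{and} that $a$ is genuinely an element of $A$ (i.e.\ a conjugate-image of something in the given fixator, or a product thereof). The delicate part is reconciling the behaviour required on the initial segment with membership in the conjugated fixator—one wants to decompose $g$, restricted to the clopen piece containing $F$, as the relevant homeomorphism while keeping the countable set $\{x_i\}$ fixed under the conjugate. Because $\{x_i\}$ is only countable, the key resource is that it can always be separated from any prescribed finite behaviour by a countable initial segment, so the separation argument should go through; verifying that the resulting map is a bona fide homeomorphism of $\omega_1$ (continuity at limit ordinals, using that initial segments are clopen) is the routine but load-bearing check.
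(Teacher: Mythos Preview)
Your approach is correct and essentially identical to the paper's: use Proposition~\ref{prop:strat} to find $\gamma$ with $F \subseteq [0,\gamma]$ and $g([0,\gamma])=[0,\gamma]$, conjugate by some $h$ so that all $h(x_i)$ lie beyond $\gamma$, and take $a$ to agree with $g$ on $[0,\gamma]$ and be the identity on $]\gamma,\omega_1[$. One slip to fix in the write-up: the conjugated fixator $\Fixa\{h(x_i)\}$ does \emph{not} consist of maps fixing $[0,\gamma]$ pointwise---the point is rather that your specific $a$, being the identity on $]\gamma,\omega_1[$, fixes every $h(x_i)$ and hence lies in $h\,\Fixa\{x_i\}\,h^{-1}\subseteq A$.
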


\begin{proof}
	Let $g \in \Homeo(\omega_1)$ and $y_1, \dots, y_k \in \omega_1$; we need to show that there exists $h \in A$ such that $g(y_i) = h(y_i)$ for $i=1, \dots, k$. By Proposition~\ref{prop:strat}, there is $\alpha > \max \{y_1, \dots, y_k\}$ such that $g([0, \alpha]) = [0, \alpha]$. Let $h$ be the homeomorphism agreeing with $g$ on $[0, \alpha]$ (hence in particular $g(y_i) = h(y_i)$) and fixing the complement $]\alpha, \omega_1[$. Let $k$ be any homeomorphism such that $\alpha < \min \{k(x_i)\}$ (such a homeomorphism exists: we can for instance swap $[0, \beta]$ and $[\beta + 1, \beta \cdot 2]$ for any limit ordinal $\beta$ such that $\beta > \sup \left(\{\alpha\} \cup \{x_i\}\right)$). Then $h \in \Fixa \{k(x_i)\} = k (\Fixa \{x_i\}) k^{-1} \subseteq A$.
\end{proof}

\begin{theorem}\label{th:propum}
	Any continuous morphism $\phi$ from $\Homeo(\omega_1)$ to a metrisable group $H$ is trivial.
\end{theorem}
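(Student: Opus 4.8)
The plan is to prove that $\ker\phi$ is the whole group $\Homeo(\omega_1)$ by showing that it is simultaneously closed and dense. Closedness is automatic: since $H$ is metrisable, it is Hausdorff, so $\{e_H\}$ is closed and $\ker\phi = \phi^{-1}(\{e_H\})$ is closed by continuity of $\phi$. For density, the idea is to invoke Lemma~\ref{lem:invdense}. The kernel is a normal subgroup, hence invariant by conjugation, so it will suffice to exhibit inside $\ker\phi$ the fixator $\Fixa(D)$ of some \emph{countable} family $D$ of points of $\omega_1$. The whole argument thus hinges on producing such a $D$, and this is precisely the step where the metrisability of $H$ is used.

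To produce $D$, I would exploit the tension between the first-countability of $H$ and the nonarchimedean structure of $\Homeo(\omega_1)$. Since $H$ is metrisable, it admits a countable basis $(V_n)_{n \in \Nat}$ of identity neighbourhoods with $\bigcap_{n} V_n = \{e_H\}$ (any $x \neq e_H$ is separated from $e_H$ by the Hausdorff property, hence excluded from some $V_n$). By continuity of $\phi$, each $\phi^{-1}(V_n)$ is an identity neighbourhood in $\Homeo(\omega_1)$. By Corollary~\ref{cor:homeoscatt}, this group is nonarchimedean, with the fixators of finite sets forming a basis of identity neighbourhoods; hence for every $n$ there is a finite set $F_n \subseteq \omega_1$ with $\Fixa(F_n) \subseteq \phi^{-1}(V_n)$. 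Setting $D = \bigcup_{n} F_n$, which is countable, we get $\Fixa(D) \subseteq \Fixa(F_n) \subseteq \phi^{-1}(V_n)$ for every $n$, and therefore $\phi(\Fixa(D)) \subseteq \bigcap_{n} V_n = \{e_H\}$. In other words, $\Fixa(D) \subseteq \ker\phi$, exactly the hypothesis needed for Lemma~\ref{lem:invdense}.

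It then remains to conclude: Lemma~\ref{lem:invdense} shows that $\ker\phi$ is dense, and being also closed it equals $\Homeo(\omega_1)$, so $\phi$ is trivial. I do not expect a genuine technical obstacle here, since the substantial work has been front-loaded into the earlier results. The conceptual heart is the interplay they encode: a continuous morphism into a first-countable group can only \enquote{detect} countably many coordinates of $\omega_1$ (nonarchimedeanity collapses each of the countably many basic neighbourhoods to a fixator of a finite set, and the union of these finite sets is still countable), whereas Lemma~\ref{lem:invdense} asserts that fixing only countably many coordinates is never enough to see anything nontrivial in $\Homeo(\omega_1)$. The only point requiring a little care is checking that $\bigcap_n V_n = \{e_H\}$ for a suitable countable neighbourhood basis, which follows from metrisability, and verifying that $\ker\phi$ meets the exact hypotheses of the density lemma.
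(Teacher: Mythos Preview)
Your proof is correct and follows essentially the same route as the paper's own argument: pull back a countable identity neighbourhood basis of $H$ through $\phi$, use the nonarchimedean structure to find finite fixator subgroups inside each preimage, deduce that $\ker\phi$ contains the fixator of a countable set, and conclude via Lemma~\ref{lem:invdense} and closedness of the kernel. The exposition is slightly more detailed than the paper's, but the logical structure is identical.
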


\begin{proof}
	Let $\{U_n\}$ be a countable identity neighbourhood basis of $H$ and $V_n = \phi^{-1} (U_n)$. Being open, each set $V_n$ must contain some $\Fixa F_n$, where $F_n$ is a finite family of points in $\omega_1$. Therefore, $\bigcap_n V_n = \ker \phi$ contains the fixator of the countable family $\bigcup_n F_n$. Since $\phi$ is continuous, this kernel is closed, hence $\ker \phi = \Homeo(\omega_1)$ by Lemma~\ref{lem:invdense}.
\end{proof}

In particular, any continuous isometric action of $\Homeo(\omega_1)$ on a \emph{separable} metric space $Z$ is trivial, since such an action gives rise to a continuous morphism to $\Isom (Z)$ (endowed with the topology of pointwise convergence), and the latter is metrisable if $Z$ is separable. The separability assumption is unavoidable : by Proposition~\ref{prop:topscattpw}, $\Homeo(\omega_1)$ admits a natural continuous unitary representation into $\ell^2 (\omega_1)$ (given by $g \cdot \delta_x = \delta_{gx}$). However, we will show below (Theorem~\ref{th:roelcke} and the discussion thereafter) that any continuous isometric action of $\Homeo(\omega_1)$ has \emph{bounded} orbits.

\begin{proposition}
	Any proper closed subgroup of $\Homeo(\omega_1)$ has uncountable index.
\end{proposition}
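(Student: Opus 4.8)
The plan is to prove the contrapositive: if $H$ is a closed subgroup of $G=\Homeo(\omega_1)$ of countable index, then $H=G$. The argument splits into two stages. First I would show that such an $H$ must contain the pointwise fixator $\Fixa([0,\delta^*[)$ of some countable initial segment. Second, I would pass to the normal core of $H$ and apply Lemma~\ref{lem:invdense}.

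For the first stage I argue by contradiction: assuming $\Fixa([0,\delta[)\not\subseteq H$ for every countable $\delta$, I construct $\aleph_1$ pairwise distinct cosets. I build by transfinite recursion an increasing continuous sequence $(\delta_\xi)_{\xi<\omega_1}$ together with homeomorphisms $\psi_\xi$ supported on the disjoint open blocks $B_\xi={]\delta_\xi,\delta_{\xi+1}[}$. At stage $\xi$ I pick $\chi_\xi\in\Fixa([0,\delta_\xi])\setminus H$ (possible by the standing hypothesis) and a finite $F_\xi$ with $\chi_\xi\Fixa(F_\xi)\cap H=\emptyset$; since $\Fixp(\chi_\xi)$ and $S_{\chi_\xi}$ are both closed and unbounded (Propositions~\ref{prop:fixclosunb} and~\ref{prop:strat}), I choose $\delta_{\xi+1}$ to be a limit ordinal in their intersection with $\delta_{\xi+1}>\max F_\xi$. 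Then $\chi_\xi$ fixes the endpoints of $B_\xi$ and preserves $B_\xi$ setwise, so truncating $\chi_\xi$ to $B_\xi$ produces $\psi_\xi$; a short computation gives the crucial property $(*_\xi)$: one has $\psi_\xi\,k\notin H$ for every $k$ fixing $[0,\delta_{\xi+1}]$ pointwise.

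The $\psi_\xi$ have pairwise disjoint supports, and each partial product $\Phi_\xi=\prod_{\eta<\xi}\psi_\eta$ is a genuine homeomorphism supported on the bounded set $[0,\delta_\xi[$. For $\xi<\zeta$ one finds that $\Phi_\xi^{-1}\Phi_\zeta$ equals $\psi_\xi$ times a homeomorphism supported above $\delta_{\xi+1}$, so $(*_\xi)$ forces $\Phi_\xi^{-1}\Phi_\zeta\notin H$; hence the $\Phi_\xi$ lie in distinct cosets, contradicting countable index. I expect the delicate point to be the continuity of these products at limit ordinals: this is exactly why the blocks are taken open with $\delta_{\xi+1}$ a limit ordinal fixed by $\chi_\xi$, so that every $\psi_\xi$ fixes its endpoints and the standard sandwiching estimate yields continuity at each $\delta_\lambda$. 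This contradiction establishes the existence of $\delta^*$ with $\Fixa([0,\delta^*[)\subseteq H$.

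For the second stage, observe that since $[G:H]$ is countable and the normalizer of $H$ contains $H$, the group $H$ has only countably many conjugates $g_nHg_n^{-1}$. Their intersection, the normal core $N$, is thus a closed normal subgroup, and it contains $\bigcap_n\Fixa(g_n[0,\delta^*[)=\Fixa\bigl(\bigcup_n g_n[0,\delta^*[\bigr)=\Fixa(D)$ for the countable set $D=\bigcup_n g_n[0,\delta^*[$. Being conjugation-invariant and containing the fixator of a countable family, $N$ is dense by Lemma~\ref{lem:invdense}; being also closed, $N=G$. As $N\subseteq H$, this gives $H=G$, completing the proof.
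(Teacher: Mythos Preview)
Your argument is correct, and the second stage (passing to the normal core and invoking Lemma~\ref{lem:invdense}) matches the paper exactly. The first stage, however, is carried out quite differently.

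The paper obtains a countable fixator inside $H$ by a short direct argument: enumerate coset representatives $\{g_n\}$ with $g_0\in H$; for each $n\neq 0$ use closedness of $H$ and the nonarchimedean structure to pick an open subgroup $U_n$ with $g_n\notin U_nH$; then the intersection $\bigcap_n U_n$ contains a fixator $F$ of a countable set, and an elementary coset computation shows $F\subseteq H$. This uses nothing about $\omega_1$ beyond the fact that basic identity neighbourhoods are fixators of finite sets.

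Your route instead \emph{manufactures} $\aleph_1$ distinct cosets by a transfinite recursion, leaning on the dynamical Propositions~\ref{prop:fixclosunb} and~\ref{prop:strat} to cut each $\chi_\xi$ down to a block $B_\xi$ while retaining the witness $F_\xi$ that keeps it out of $H$. The construction is sound: the key separation property $(*_\xi)$ follows because $\psi_\xi k$ agrees with $\chi_\xi$ on $F_\xi$, and the infinite products $\Phi_\xi$ are honest homeomorphisms since each $\psi_\eta$ preserves the clopen block $[\delta_\eta,\delta_{\eta+1}]$ and fixes its endpoints, so continuity at the limit levels $\delta_\lambda$ is immediate from the sandwiching $]\delta_\eta,\delta_\lambda[\to]\delta_\eta,\delta_\lambda[$. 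What this buys you is a more concrete, constructive picture of where the uncountably many cosets come from; what it costs is considerably more bookkeeping than the paper's two-line separation argument, which only needs that $G$ is nonarchimedean.
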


\begin{proof}
	Let $H$ be a closed subgroup of $G = \Homeo(\omega_1)$ with countable index and let $\{g_n\}$ be a representative system for the right cosets of $H$, with $g_0 \in H$. Since $H$ is closed and $G$, as a Hausdorff topological group, is completely regular, there exists, for any $n \neq 0$, an open subgroup $U_n$ such that $g_n \not\in U_n H$. Once again, the intersection of all the $U_n$'s contains the fixator $F = \Fixa \{x_i\}$ of some countable family of points in $\omega_1$. Let $f \in F$ and write $f = g_i h$ for some index $i$ and $h \in H$. Then $g_i = f h^{-1} \in F H \subseteq U_n H$ for all $n \neq 0$, hence $i = 0$. Therefore, $F \subseteq H$.

	Let now $N$ be the core of $H$. Since $N = \bigcap_n g_n H g_n^{-1}$, it contains $\bigcap_n g_n F g_n^{-1}$, which is the fixator of the countable family $\{g_n x_i\}$. Hence $N = H = \Homeo(\omega_1)$ by Lemma~\ref{lem:invdense}.
\end{proof}

\begin{remark}
    We prove in the companion paper that the only closed normal subgroups of $\Homeo(\omega_1)$ are the fixators of the derived subsets, $\Fixa(\omega_1^{(\alpha)})$ \cite[Remark~11 and Proposition~18]{Gheysens_dynscatt}.
\end{remark}

\subsection{Baire property}\label{sec:baire}

The group $\Homeo(\omega_1)$ is not Baire. Consider for instance, for any positive integer $n$, the set $T_n = \bigcup_{n \leq k < \omega} \Fixa (k)$. This open set is dense. Indeed, for any $g \in \Homeo(\omega_1)$ and $x_1, \dots, x_m \in \omega_1$, we can find an integer $k \geq n$ such that $k \neq x_i$ and $k \neq g(x_i)$ for all $i= 1, \dots, m$. Therefore, the homeomorphism $h = g \circ (k,\ g^{-1} (k))$ belongs to $T_n$ and satisfies $h(x_i) = g(x_i)$ for all $i$.

However, if $g \in \bigcap_{n < \omega} T_n$, then there exists infinitely many integers $k_j$ such that $g(k_j) = k_j$, hence $g(\omega) = \omega$ by continuity. Consequently, the countable intersection of the $T_n$'s is contained in the closed subgroup $\Fixa(\omega)$, thus is not dense: $\Homeo(\omega_1)$ is not Baire.

This stands in contrast to the symmetric group $\Sym(\aleph_1)$, which is Baire (see Appendix~\ref{app:symbaire}).

\subsection{Roelcke-precompactness}\label{sec:roelcke}

Homeomorphisms of a space have to preserve derived subsets. For $\Homeo(\omega_1)$, this is the only obstruction to $k$-transitivity, namely:

\begin{lemma}\label{lem:oligom}
Let $k$ be any positive integer. Let $\alpha_1, \dots, \alpha_k$ be $k$ countable ordinals (not necessarily pairwise distinct). For $i = 1, \dots, k$, let $x_i$ and $y_i$ be two points in $Y^{(\alpha_i)} = \omega_1^{(\alpha_i)} \setminus \omega_1^{(\alpha_i + 1)}$. Assume that $x_i \neq x_j$ and $y_i \neq y_j$ whenever $i \neq j$. Then there exists $g \in \Homeo(\omega_1)$ such that $g(x_i) = y_i$ for each $i$.
\end{lemma}

\begin{proof}
 By induction on $k$, it is enough to show that, for any positive integer $n$ and any points $x_1, \dots, x_n \in \omega_1$, the fixator $F = \Fixa\{x_1, \dots, x_n\}$ acts transitively on $Y^{(\alpha)} \setminus \{x_1, \dots, x_n\}$ for any countable ordinal $\alpha$. Let then $y$ and $z$ be two distinct points of the latter set and let us show that there is $g \in F$ such that $g(y) = z$. We may assume that $y < z$ (up to switching $g$ and its inverse). Observe also that the existence of $g$ is trivial if $\alpha = 0$: in that case, $y$ and $z$ are isolated in $\omega_1$, hence the transposition $(y, z)$ is a homeomorphism.

	We can therefore assume that $\alpha > 0$, hence $y$ and $z$ are of the form $y = y' + \omega^\alpha$ and $z = z' + \omega^\alpha$, for some $y'$ and $z'$ such that $y' < y \leq z' < z$. Moreover, since, for any $\gamma < \omega^\alpha$, we have $\gamma + \omega^\alpha = \omega^\alpha$, there are infinitely many such $y'$ and $z'$, hence we can find some such that the intervals $]y', y]$ and $]z', z]$ do not meet $\{x_1, \dots, x_n\}$.

	Since $y' + \omega^\alpha = y$ and $z' + \omega^\alpha = z$, there is a (unique) isomorphism of ordered sets $h\colon ]y', y] \rightarrow ]z', z]$ (both intervals are isomorphic to $]0, \omega^\alpha]$). Since these intervals are closed, open, and disjoint, the map $g$ that switches $x$ and $h(x)$ for any $x \in ]y', y]$ and fixes any point outside $]y', y] \cup ]z', z]$ is a homeomorphism of $\omega_1$. By construction, all the points $x_i$'s are outside these two intervals, hence $g \in F$ as required.
\end{proof}

For the next theorem, we recall that a topological group $H$ is said \emph{Roelcke-precompact} if it is precompact for its lower uniform structure, that is: for any identity neighbourhood $U$, there is a finite subset $F$ of $H$ such that $H = UFU$.

\begin{theorem}\label{th:roelcke}
	The group $\Homeo(\omega_1)$ is Roelcke-precompact.
\end{theorem}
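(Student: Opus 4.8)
The plan is to exploit the non-archimedean structure of $G = \Homeo(\omega_1)$ furnished by Corollary~\ref{cor:homeoscatt}. Since the fixators $U = \Fixa(E)$ of finite sets $E = \{x_1, \dots, x_n\}$ form a basis of identity neighbourhoods, and since $U' F U' \supseteq U F U$ whenever $U' \supseteq U$, it suffices to produce, for each such basic open subgroup $U$, a finite set $F$ with $G = U F U$. For an open subgroup this reduces to showing that $G$ carries only finitely many double cosets $U g U$: a finite transversal $F$ then satisfies $G = \bigcup_i U g_i U = U F U$. So I would reduce the whole theorem to the combinatorial claim that $U \backslash G / U$ is finite for every $U = \Fixa(E)$.

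First I would translate double cosets into an orbit-counting problem via the evaluation map $\Phi \colon G \to \omega_1^n$, $\Phi(g) = (g x_1, \dots, g x_n)$. A direct computation gives $\Phi(u g v) = u \cdot \Phi(g)$ for $u, v \in U$ (the right factor $v$ fixes each $x_i$, while $u$ acts coordinatewise), so $\Phi$ induces a map from $U \backslash G / U$ to the set of $U$-orbits on $\mathrm{im}(\Phi)$. This induced map is a bijection: if $\Phi(g') = u \cdot \Phi(g)$, then $h := g^{-1} u^{-1} g'$ fixes every $x_i$, whence $g' = u g h \in U g U$. Thus counting double cosets amounts to counting $U$-orbits on the tuple set $\mathrm{im}(\Phi)$.

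Next I would identify $\mathrm{im}(\Phi)$ and its $U$-orbits using Lemma~\ref{lem:oligom}. Since homeomorphisms are injective and preserve the derived subspaces, $\mathrm{im}(\Phi)$ consists of tuples of distinct points whose $i$-th coordinate has the same Cantor--Bendixson rank $\alpha_i$ as $x_i$; Lemma~\ref{lem:oligom} shows conversely that every such tuple is attained. To each tuple $(y_1, \dots, y_n)$ in this set I attach its \emph{coincidence pattern} with $E$, the partial injection recording, for each $i$, whether $y_i$ equals some $x_j$ and which one. There are only finitely many such patterns, so the crux is that the pattern is a complete invariant of the $U$-orbit. Its invariance is immediate, since every $u \in U$ fixes $E$ pointwise and hence permutes the complement $\omega_1 \setminus E$ among itself.

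The main obstacle is the converse, that two tuples with the same coincidence pattern lie in a single $U$-orbit; this is where I expect the real work, although it follows fairly directly from Lemma~\ref{lem:oligom}. Given tuples $y$ and $z$ with a common pattern, the coordinates landing in $E$ already agree, and for the remaining coordinates (those avoiding $E$) I would apply Lemma~\ref{lem:oligom} to the source configuration $\{x_1, \dots, x_n\} \cup \{y_i : y_i \notin E\}$ and the target configuration $\{x_1, \dots, x_n\} \cup \{z_i : z_i \notin E\}$, under the correspondence fixing each $x_j$ and sending $y_i \mapsto z_i$. One must check that both configurations consist of distinct points and that corresponding points share the same rank, which holds by the definition of the pattern and of $\mathrm{im}(\Phi)$; the lemma then yields a single $u \in U = \Fixa(E)$ with $u \cdot y = z$. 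Hence the $U$-orbits on $\mathrm{im}(\Phi)$, and therefore the double cosets in $U \backslash G / U$, are finite in number, which completes the proof.
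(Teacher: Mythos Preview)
Your proof is correct and follows essentially the same approach as the paper: both parametrize the double cosets $U\backslash G/U$ by the finitely many partial injections of $\{1,\dots,n\}$ (your \enquote{coincidence patterns}) and invoke Lemma~\ref{lem:oligom} to verify that this parametrization is exhaustive. Your orbit-counting packaging via the evaluation map $\Phi$ is slightly more explicit than the paper's direct construction of representatives $h_\sigma$, but the underlying argument is the same.
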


\begin{proof}
	Let $U = \Fixa\{x_1, \dots, x_n\}$ be any basic identity neighbourhood of $G = \Homeo(\omega_1)$. For any partially defined injective map $\sigma\colon \{1, \dots, n\} \rightarrow \{1, \dots, n\}$, let us choose, thanks to Lemma~\ref{lem:oligom}, a homeomorphism $h_\sigma$ such that $h_\sigma (x_i) = x_{\sigma(i)}$ whenever $i \in \dom \sigma$ is such that $x_i$ and $x_{\sigma (i)}$ are in the same set $Y^{(\alpha)}$. We then claim that $G = UFU$, where $F$ is the finite set of all $h_\sigma$. Indeed, for any $g \in G$, let $\tau$ be the partially defined injective map of $\{1, \dots, n\}$ which is given by $g(x_i) = x_{\tau(i)}$. In particular, for any $i \not\in \dom \tau$, both $h_\tau (x_i)$ and $g(x_i)$ avoid the set $\{x_1, \dots, x_n\}$, hence we can find, by Lemma~\ref{lem:oligom}, a homeomorphism $u \in U$ such that $u(h_\tau (x_i)) = g(x_i)$ for all $i \not\in \dom \tau$. Therefore, $g^{-1} u h_\tau \in U$, hence $g \in UFU$.
\end{proof}

As a general consequence of Roelcke-precompactness, $\Homeo(\omega_1)$ enjoys the following rigidity property: any continuous isometric action on a metric space has bounded orbits \cite[Prop.~2.3]{Tsankov_2012}. Note that this property is unrelated to the absence of morphisms to metrisable \emph{groups} (Theorem~\ref{th:propum}), as shown in Appendix~\ref{app:tree}.

\begin{remark}
	The above Lemma and Theorem are very reminiscent of the well-known link between oligomorphy and Roelcke-precompactness (see for instance \cite[Th.~2.4]{Tsankov_2012}, where countability assumptions are only used to stay in the Polish realm). Indeed, Lemma~\ref{lem:oligom} shows that for any $n$-tuple of countable ordinals $\alpha_1, \dots, \alpha_n$, the action of $\Homeo(\omega_1)$ on $Y^{(\alpha_1)} \cup \dots \cup Y^{(\alpha_n)}$ is oligomorphic. Hence $\Homeo(\omega_1)$ is an (uncountable) inverse limit of oligomorphic groups and therefore Roelcke-precompact.
\end{remark}

\subsection{Amenability}\label{sec:amen}

We recall that, for a general topological group $H$, amenability is defined as the existence of a left-invariant mean on the space $\Contrucb(H)$ of all bounded right-uniformly continuous maps from $H$ to $\Reels$. Equivalently, $H$ is amenable if any jointly continuous affine action of $H$ on a nonempty compact convex subset of a locally convex space has a fixed point \cite[4.2]{Rickert_1967}.

We will need the following well-known fact, for which we provide a proof for the convenience of the reader.

\begin{lemma}
Let $(G_i)_{i \in I}$ be a family of amenable topological groups. The direct product $\prod_{i \in I} G_i$ is amenable (as a topological group).
\end{lemma}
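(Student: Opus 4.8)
The plan is to establish amenability via the fixed-point characterisation quoted just above the lemma: I want to show that any jointly continuous affine action of $P = \prod_{i \in I} G_i$ on a nonempty compact convex subset $K$ of a locally convex space has a fixed point. First I would reduce to the finite case by a compactness argument. For each finite subset $J \subseteq I$, consider the subgroup $P_J = \prod_{i \in J} G_i \times \{1\}$ (embedded via the identity on coordinates in $J$ and the trivial element elsewhere), and let $\Fixa(P_J) \subseteq K$ denote its set of fixed points. The key observation is that if the finite product $\prod_{i \in J} G_i$ is amenable, then $\Fixa(P_J)$ is nonempty; moreover it is closed (as the intersection of the closed sets $\Fixa(g)$ over $g \in P_J$, each closed by continuity of the action and the Hausdorffness of $K$) and convex (as an intersection of convex sets, since each $g$ acts affinely). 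Since the finite subsets of $I$ are directed by inclusion and $\Fixa(P_{J_1 \cup J_2}) \subseteq \Fixa(P_{J_1}) \cap \Fixa(P_{J_2})$, the family $\{\Fixa(P_J)\}_J$ has the finite intersection property. By compactness of $K$, the total intersection $\bigcap_J \Fixa(P_J)$ is nonempty, and any point therein is fixed by every $P_J$, hence by the subgroup $\bigcup_J P_J$ generated by all finitely supported elements. Since this subgroup is dense in $P$ and the fixed-point set of a single element is closed, such a point is in fact fixed by all of $P$.

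Next I would reduce the finite case to the case of two factors by induction: $\prod_{i \in J} G_i$ with $\abs{J} = m$ is an extension of $\prod_{i \in J'} G_i$ (with $\abs{J'} = m-1$) by $G_{i_0}$, and amenability is preserved under extensions, which itself follows from the two-factor case applied iteratively. So the heart of the matter is the following: if $G_1$ and $G_2$ are amenable, so is $G_1 \times G_2$. For this I would use the mean-theoretic definition rather than the fixed-point one, as it is cleaner. Let $m_1$ be a left-invariant mean on $\Contrucb(G_1)$ and $m_2$ a left-invariant mean on $\Contrucb(G_2)$. Given $f \in \Contrucb(G_1 \times G_2)$, I would first apply $m_2$ in the second variable to produce a function $g_1 \mapsto m_2\bigl(g_2 \mapsto f(g_1, g_2)\bigr)$ on $G_1$, then apply $m_1$ to the result, defining $m(f) = m_1\bigl(g_1 \mapsto m_2(f(g_1, \cdot))\bigr)$. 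One checks that $m$ is a mean (positivity, normalisation, linearity are immediate from those of $m_1, m_2$) and that it is left-invariant under the $G_1 \times G_2$-action, using that $m_1$ is $G_1$-invariant and $m_2$ is $G_2$-invariant, with the two invariances acting on disjoint variables.

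The main obstacle, and the step requiring genuine care, is verifying that the partial-mean construction is well-defined, i.e.~that the iterated functions to which $m_1$ and $m_2$ are applied actually lie in the respective $\Contrucb$ spaces. For this I must check that for fixed $g_1$ the section $f(g_1, \cdot)$ is bounded and right-uniformly continuous on $G_2$ (which follows by restricting the right-uniform continuity of $f$ on $G_1 \times G_2$ to the slice), and, more delicately, that the function $g_1 \mapsto m_2(f(g_1, \cdot))$ is itself bounded and right-uniformly continuous on $G_1$. Boundedness is clear since a mean has norm one. For right-uniform continuity, I would exploit that $f$, being right-uniformly continuous on the product, is uniformly approximated in the first variable uniformly over the second: given $\varepsilon > 0$ there is an identity neighbourhood $W_1 \subseteq G_1$ such that $\abs{f(g_1 h, g_2) - f(g_1, g_2)} < \varepsilon$ for all $h \in W_1$ and all $g_2 \in G_2$; applying $m_2$ and using its monotonicity and norm then transfers this estimate to the partial mean. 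This uniformity over the second variable is exactly what the right-uniform continuity on the product group supplies, and it is the crux that makes the argument go through; once it is in hand, left-invariance and the verification that $m$ is a mean are routine.
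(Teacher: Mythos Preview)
Your proof is correct and follows essentially the same route as the paper's: reduce to finite products via induction on the number of factors (what the paper calls stability under extensions), then pass to the full product by exploiting the density of the restricted product $\bigoplus_i G_i$ (your FIP/compactness argument combined with the closedness of fixed-point sets is exactly this density step unfolded inside a specific compact convex $K$). The only cosmetic difference is that the paper simply cites the three stability properties (extensions, directed unions, dense subgroups) from Rickert, whereas you prove the needed instances directly---in particular handling the two-factor case by an explicit iterated-mean construction rather than by invoking extension stability.
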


\begin{proof}
Since amenability is stable by extensions, the direct product of \emph{finitely many} amenable groups is amenable by induction. Therefore, the \emph{restricted} product $\bigoplus_i G_i$, which is a directed union of amenable groups, is itself amenable. Lastly, the restricted product is dense in the direct product, hence the latter is amenable. (Stability of amenability by extensions and by directed unions, as well as inheritance from a dense subgroup, are easily shown via the fixed-point characterisation of amenability; they can be found for instance in 4.8, 4.7 and 4.5 of \cite{Rickert_1967}.)
\end{proof}

(Note that it is essential to consider the product topology on $\prod_i G_i$. Indeed, the direct product of infinitely many amenable groups is usually not amenable as a discrete group; for instance, $\prod_n \operatorname{SL}_2 (\Int / n \Int)$ contains a free subgroup, hence is not amenable as an abstract group, see e.g.~\cite[1.2.3 and 1.2.5]{Greenleaf_1969}.)

\begin{theorem}\label{th:amen}
	The group $\Homeo(\omega_1)$ is amenable.
\end{theorem}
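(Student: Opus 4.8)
The plan is to exploit the structure already assembled in the paper: the stratification of $\omega_1$ into the discrete invariant layers $Y^{(\alpha)} = \omega_1^{(\alpha)} \setminus \omega_1^{(\alpha+1)}$, together with the fact that $\Homeo(\omega_1)$ is nonarchimedean (Corollary~\ref{cor:homeoscatt}) and that amenability passes through the constructions recorded in the preceding Lemma (products, dense subgroups, extensions, directed unions). The first and most natural idea is that each homeomorphism permutes each layer $Y^{(\alpha)}$, so restriction gives a map $\Homeo(\omega_1) \to \prod_\alpha \Sym(Y^{(\alpha)})$. Since every $\Sym(Y^{(\alpha)})$ is a full symmetric group on a set, it is \emph{extremely amenable} on the appropriate topology (or at least amenable as a topological group with the pointwise-convergence topology, by the standard argument that a full symmetric group fixes a point in any affine flow via an ultrafilter/Ramsey-type mean), and a product of amenable groups is amenable by the Lemma. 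The strategy is therefore to realise $\Homeo(\omega_1)$ as amenable by comparing it, via a continuous homomorphism with amenable image and amenable kernel, to such a product, then invoking stability under extensions.

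Concretely, I would first argue that the restriction-to-layers homomorphism $\rho\colon \Homeo(\omega_1) \to \prod_{\alpha<\omega_1} \Sym\bigl(Y^{(\alpha)}\bigr)$ is continuous (each layer is discrete, and a basic neighbourhood on the right is pulled back to a fixator of finitely many points, which is open on the left). The image lands in an amenable group: each symmetric group $\Sym(Y^{(\alpha)})$ is amenable for the topology of pointwise convergence (the classical fact that the full symmetric group on any set is amenable as a topological group — it is even extremely amenable after an appropriate linear-order enrichment, but plain amenability suffices here), and the product of these is amenable by the Lemma just proved. So $\overline{\rho(G)}$ is amenable, being a closed subgroup — here I would use that a closed subgroup of an amenable topological group is amenable, or more safely that amenability is inherited by the closure of an amenable subgroup in the reverse direction, so I would instead phrase it as: $\rho(G)$ is amenable because it is a subgroup of an amenable group (amenability is inherited by subgroups in the discrete sense; for topological groups one works with the closure, which is a closed subgroup of the amenable product and hence amenable).

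The remaining step is to understand $\ker \rho$, the group of homeomorphisms fixing every point of every layer, i.e.\ fixing every point of $\omega_1$ — but that is just the trivial group, since the layers partition $\omega_1$. Hence $\rho$ is \emph{injective}, and $G$ embeds continuously into an amenable group; the only thing I then need is that this embedding reflects amenability, which follows once I check that $\rho$ is a topological embedding (a homeomorphism onto its image), so that $G$ is isomorphic as a topological group to the subgroup $\rho(G)$ of the amenable product. Injectivity is immediate; for the embedding property I would verify that the inverse map is continuous by noting that a basic neighbourhood of the identity in $G$ (a fixator of finitely many points $x_1,\dots,x_n$, each lying in some layer $Y^{(\alpha_i)}$) is the preimage under $\rho$ of the product of the corresponding fixators of those finitely many points inside the factors $\Sym(Y^{(\alpha_i)})$, which is open in the product topology.

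\emph{Where the difficulty lies.} The genuinely substantial point is \textbf{not} the group-theoretic bookkeeping but establishing that each $\Sym(Y^{(\alpha)})$ — a symmetric group on a possibly uncountable set, with the topology of pointwise convergence on a \emph{discrete} set — is amenable as a topological group. For countable underlying sets this is classical (even extreme amenability holds, by Pestov, via the Ramsey property of the class of finite linear orders); the uncountable case needs either a transfinite/ultrafilter mean argument or a reduction, via directed unions, to the symmetric groups of finite subsets, whose limit is then handled by the inheritance-from-dense-subgroup and directed-union stabilities invoked in the Lemma. I expect the author to route around even this by presenting $\Sym$ of a discrete set directly as a directed union (over finite subsets) of point-stabiliser cosets, exhibiting amenability through finite symmetric groups (which are amenable, being finite) together with the density and directed-union stabilities already recorded. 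That decomposition — reducing the amenability of the full symmetric group to that of finite groups via the nonarchimedean structure — is the main obstacle and the crux of the argument.
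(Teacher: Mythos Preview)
Your route---embedding $G=\Homeo(\omega_1)$ into the product $\prod_{\alpha}\Sym(Y^{(\alpha)})$ via restriction to layers---is exactly the paper's, and your verification that $\rho$ is a continuous injective topological embedding is fine. The genuine gap is the step where you conclude amenability of $G$ from amenability of the product. You write that $\overline{\rho(G)}$ is amenable ``being a closed subgroup'' of an amenable group; this is \emph{false} for topological groups. For instance, the free group $F_2$ sits as a discrete (hence closed) subgroup of $\Sym(F_2)\cong\Sym(\aleph_0)$ via left translation, and the latter is amenable while $F_2$ is not. So merely embedding $G$ topologically into an amenable group proves nothing, and your hedged alternative (``amenability is inherited by subgroups in the discrete sense'') does not apply here either.

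What rescues the argument---and what the paper actually does---is to show that $\rho(G)$ is \emph{dense} in the product. This is precisely the content of Lemma~\ref{lem:oligom}: given finitely many points in finitely many layers and any prescribed injective layer-preserving assignment, there is a homeomorphism realising it, so every basic open set in $\prod_\alpha \Sym(Y^{(\alpha)})$ meets $\rho(G)$. Once density is established, one uses that for a dense subgroup $H\leq \overline{H}$ with the induced topology, the right uniform structures coincide, hence $\Contrucb(H)\cong\Contrucb(\overline{H})$ $H$-equivariantly, and an invariant mean on the latter restricts to one on the former. Thus the substantial step you did not identify is the density, not the amenability of $\Sym(Y^{(\alpha)})$; the latter, as you correctly guessed, comes from the dense locally finite subgroup of finitely supported permutations.
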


\begin{proof}
	By Proposition~\ref{prop:topscattpw}, the topological group $G = \Homeo(\omega_1)$ is isomorphic to a subgroup of $\Sym(\aleph_1)$. Let $\overline{G}$ be the closure of $G$ in $\Sym(\aleph_1)$. Since, for any \emph{topological} groups $H < H'$, the uniform structure induced on $H$ by the right uniform structure of $H'$ is the same as the right uniform structure associated to the topology induced on $H$ by $H'$ (see \cite[3.24]{RD_1981}), there is a $G$-equivariant isometry between $\Contrucb (G)$ and $\Contrucb (\overline{G})$. Therefore, to prove the amenability of $G$, we only need to prove that of $\overline{G}$. But, thanks to Lemma~\ref{lem:oligom}, $\overline{G}$ is nothing but the direct product of groups of the form $\Sym\left(Y^{(\beta)}\right)$, where $Y^{(\beta)} = \omega_1^{(\beta)} \setminus \omega_1^{(\beta + 1)}$. All of these are isomorphic to $\Sym(\aleph_1)$, which contains a dense locally finite subgroup, hence is amenable. Therefore, $\overline{G}$ is amenable by the above Lemma.
\end{proof}

\begin{remark}
The group $G$ is \emph{not} closed in $\Sym(\aleph_1)$ (that is, $G \neq \overline{G}$). For instance, the permutation that swaps $n$ et $\omega + n$ for any $1 \leq n < \omega$ is in $\overline{G}$ but is not continuous at the point $\omega$, hence is not in $G$. (Moreover, it can be checked that $\overline{G}$ is actually Baire, see~Appendix~\ref{app:symbaire}.)
\end{remark}

\begin{remark}
	Similarly to the symmetric group $\Sym(\aleph_1)$, the group $\Homeo(\omega_1)$ admits an action on the set of all linear orders on $\aleph_1$ (compare with \cite[\S~2]{GW_2002}), which has no fixed point owing to Lemma~\ref{lem:oligom}. This set can be seen as a closed subset of $2^{\aleph_1 \times \aleph_1}$ and is therefore compact. By Proposition~\ref{prop:topscattpw}, this action is jointly continuous. Hence $\Homeo(\omega_1)$ is not \emph{extremely} amenable. (Extreme amenability is defined by removing the \enquote{convex} assumption in the fixed-point property, namely: $H$ is \emph{extremely amenable} if any jointly continuous action of $H$ on a nonempty compact space has a fixed point.)

Actually, more can be said about continuous actions on compact sets: the \emph{universal minimal flow} of $\Homeo(\omega_1)$ is computed in the companion paper \cite{Gheysens_dynscatt} (see in particular Remark~11 and Theorem~15 therein) --- and was also obtained independently by Basso and Zucker in \cite[\S~9]{BZ_2020}.
\end{remark}

\section{Further remarks}

\begin{remark}
    When it comes to exhibiting pathological phenoma in topology, the compact space $\omega_1 + 1$ (more commonly written as $[0, \omega_1]$ or $[0, \Omega]$), which is also the one-point compactification of $\omega_1$, is another first-rate space (Example~43 in \cite{SS_1978}). For this note, working instead with its sibling $\omega_1$ is purely a cosmetic choice, since both spaces have the \emph{same} homeomorphism group. Indeed, in $[0, \omega_1]$, the point $\omega_1$ is fixed by any homeomorphism, because it is the only point belonging to the $\omega_1$-st derived subset of $[0, \omega_1]$. Therefore, the canonical topological embedding $\Homeo(\omega_1) \rightarrow \Homeo(\omega_1 + 1)$, given by extending any homeomorphism of $\omega_1$ to its one-point compactifiation, is here a topological isomorphism.
\end{remark}

\begin{remark}\label{rem:gen}
	We focused on the ordinal $\omega_1$ for the sake of exposition, but a careful analysis of the proofs shows that the results are valid for other ordinals:
	\begin{itemize}
		\item Sections~\ref{sec:fphomeo} and~\ref{sec:quotients} hold for any ordinal with uncountable cofinality;
		\item Baire property would fail for any ordinal $> \omega \cdot 2$;
		\item Roelcke-precompactness and amenability hold for all ordinals.
	\end{itemize}
For the latter point, two paths are available. The ad hoc proof we gave here for $\omega_1$ (starting with the key Lemma~\ref{lem:oligom}) can be followed again for other ordinals. The only annoyance is to check if the words \enquote{countable ordinal} were meant for \enquote{point in $\omega_1$} or for \enquote{ordinal below the Cantor--Bendixson rank of $\omega_1$}. Alternatively, and perhaps more satisfactorily, we provide in the companion paper an argument (summarized in \cite[Corollary~23]{Gheysens_dynscatt}) that works for any locally compact scattered space, sparing us the resort to the specific structure of ordinals.
\end{remark}

\begin{remark}
	A connected cousin of the space $\omega_1$ is the \emph{closed long ray}, namely the space $\clr = \omega_1 \times [0, 1[$ endowed with the lexicographic order topology, see for instance \cite[\S~1.2]{Gauld_2014}. The group $\Homeo(\clr)$ is also a topological group when endowed with the topology of pointwise convergence, which agrees with the topology of uniform convergence on compact sets (compare with Remark~\ref{rem:pwunifcomp}). In contrast with $\Homeo(\omega_1)$, the group $\Homeo(\clr)$ is path-connected \cite[Cor.~2]{Gauld_1991}.
\end{remark}

\begin{remark}\label{rem:actionnotcontinuous}
	For any positive integer $n$, let $g_n \in \Homeo(\omega_1)$ be the transposition $(n, \omega + n)$. Then the sequence $g_n$ converges to the identity, but $g_n (n)$ converges to $\omega \cdot 2$. Hence despite Proposition~\ref{prop:topscattpw}, the action map $\Homeo(X) \times X \rightarrow X$ is in general \emph{not} continuous when $X$ is scattered but not discrete. (Helmer provided other examples where the topology of pointwise convergence is compatible with the structure of some group of homeomorphisms but does not yield the continuity of the action map, see~\cite[Ex.~13 and 14]{Helmer_1980}.)
\end{remark}

\appendix
\section{\texorpdfstring{$\Sym(X)$}{Sym(X)} is a Baire group}\label{app:symbaire}

In contrast with $\Homeo(\omega_1)$, we show here that $\Sym(X)$, endowed with the topology of pointwise convergence, is a Baire group, whatever the cardinality of $X$. When $X$ is countable, this is encompassed in the well-known fact that $\Sym(X)$ is Polish. However, when $X$ is uncountable, the classical Baire category theorem does not apply since $\Sym(X)$ is then neither metrisable or locally compact. The group $\Sym(X)$ is nonetheless still \emph{sievable} in Choquet's sense \cite{Choquet_1958_tamis}, a notion we now recall.

A \emph{sieve} (in French: \emph{tamis}) on a topological space is a binary relation $\sqsubseteq$ on \emph{nonempty} open subsets such that:
\begin{enumerate}
	\item $O \sqsubseteq O'$ implies $O \subseteq O'$;
	\item $O_1 \subseteq O_2 \sqsubseteq O_3 \subseteq O_4$ implies $O_1 \sqsubseteq O_4$;
	\item for any $O$, there is $O'$ such that $O' \sqsubseteq O$;
	\item if $(O_n)$ is a sequence such that $O_{n+1} \sqsubseteq O_n$ for all $n$, then the intersection $\bigcap_n O_n$ is nonempty 
\end{enumerate}
($O$, $O'$ and $O_n$ above are all assumed to be nonempty open subsets). It is easy to show that a sievable space (that is, a space admitting a sieve) is Baire. Moreover, locally compact spaces and completely metrisable spaces are sievable \cite{Choquet_1958_tamis}. The remarkable feature of sieves is that an arbitrary product of sievable spaces is still sievable---whereas the product of two Baire spaces is not necessarily Baire \cite{Cohen_1976}, even if they are topological groups \cite{Valdivia_1985}.

\begin{theorem}
	The topological group $\Sym(X)$ is sievable.
\end{theorem}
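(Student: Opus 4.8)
The plan is to exhibit an explicit sieve on $\Sym(X)$ and then invoke the fact, already recalled, that sievable spaces are Baire; the construction will be uniform in the cardinality of $X$, with no case distinction. Recall that a basis of the topology of pointwise convergence is given by the clopen sets $[s] = \{g \in \Sym(X) : g \supseteq s\}$, where $s$ ranges over the \emph{finite partial injections} of $X$ (bijections between two finite subsets of $X$). Each such set is nonempty, since any finite partial injection $s\colon A \to B$ satisfies $\abs{A} = \abs{B}$, hence $\abs{X \setminus A} = \abs{X \setminus B}$, so $s$ extends to a permutation; and one checks directly that $[s] \subseteq [t]$ holds precisely when $t \subseteq s$.

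The key idea is to force the finite pieces produced along a decreasing sequence to assemble into an honest permutation, in a way insensitive to $\abs{X}$. To that end, call a finite partial injection $s$ \emph{balanced} if $\dom s$ equals its range, that is, if $s$ is a permutation of a finite subset of $X$. I would then define the relation $\sqsubseteq$ on nonempty open sets by declaring $O' \sqsubseteq O$ if and only if there is a balanced $s$ with $O' \subseteq [s] \subseteq O$.

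Verifying the four sieve axioms splits into a routine part and one genuinely load-bearing computation. Axioms (1) and (2) are immediate from the sandwich $O' \subseteq [s] \subseteq O$ together with transitivity of inclusion. For axiom (3), given a nonempty open $O$ I pick a basic set $[t] \subseteq O$ and extend the partial injection $t\colon A \to B$ to a permutation $s$ of the finite set $A \cup B$ (possible since $\abs{B \setminus A} = \abs{A \setminus B}$, so the missing values can be matched up); then $s$ is balanced and $[s] \subseteq [t] \subseteq O$. The crucial axiom (4) is where the balanced condition pays off: from a sequence with $O_{n+1} \sqsubseteq O_n$ I extract balanced $s_n$ with $O_{n+1} \subseteq [s_n] \subseteq O_n$, and the sandwich $[s_{n+1}] \subseteq O_{n+1} \subseteq [s_n]$ forces $s_n \subseteq s_{n+1}$. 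Their union $s$ again has domain equal to its range, hence is a permutation of its support $D = \dom s$; extending it by the identity on $X \setminus D$ yields a permutation lying in $[s] = \bigcap_n [s_n] \subseteq \bigcap_n O_n$, which is therefore nonempty.

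I expect the main conceptual obstacle to be precisely the \emph{design} of the relation rather than any single verification. With \emph{unrestricted} finite partial injections, the union appearing in axiom (4) can be injective but not surjective --- the shift $n \mapsto n+1$ on $\Nat$ being the archetypal failure --- so the intersection may be empty. Insisting that each $s$ be balanced is exactly what makes the limiting map automatically a permutation, via the trivial identity extension, and this is what renders the argument uniform in $\abs{X}$ (in contrast to the countable case, where the completeness of the standard metric already encodes a back-and-forth). The remaining care is purely bookkeeping: keeping the equivalence $[s] \subseteq [t] \iff t \subseteq s$ pointing the right way, and checking that balancedness survives nested unions.
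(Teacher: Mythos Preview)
Your argument is correct and in fact cleaner than the paper's. The paper defines $O_1 \sqsubseteq O_2$ via the existence of finite data $x_1,\dots,x_k$ and finite sets $Y_1,\dots,Y_k$ with $O_1 \subseteq \{g : g(x_i)\in Y_i\}\subseteq O_2$; for axiom~(4) it then has to normalise a decreasing sequence, solve the resulting selection problem by Hall's marriage theorem to obtain an injective map on countably many points, and finally use that $X$ is \emph{uncountable} to extend this injection to a bijection of $X$. (The countable case is dismissed beforehand as Polish.) Your balanced-partial-injection sieve sidesteps all of this: because each $s_n$ is a permutation of its finite support, the union $s=\bigcup_n s_n$ automatically has $\dom s=\operatorname{ran} s$, so the identity extension to $X\setminus \dom s$ is a genuine permutation regardless of $\abs{X}$. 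What you lose is nothing; what you gain is a uniform, Hall-free proof with no cardinality case split. The only point worth making explicit in a final write-up is that $[s]$ is nonempty for balanced $s$ (immediate, by the same identity extension), so that axiom~(3) really produces a nonempty $O'$.
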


\begin{proof}
We may assume that $X$ is uncountable. Let $\sqsubseteq$ be the binary relation on nonempty open subsets of $\Sym(X)$ defined by: $O_1 \sqsubseteq O_2$ if there are finitely many $x_1, \dots, x_k \in X$ and finitely many finite subsets $Y_1, \dots, Y_k \subset X$ such that
	\begin{equation}\label{eq:sievesym}
		O_1 \subseteq \{g \in \Sym(X)\ |\ g(x_i) \in Y_i\ \text{for all}\ i=1, \dots, k\} \subseteq O_2.
	\end{equation}
	The first two properties of the definition of a sieve are obviously satisfied by $\sqsubseteq$, and the third one is ensured by the definition of the topology of pointwise convergence on $\Sym(X)$. To check the fourth one, let $(O_n)$ be a $\sqsubseteq$-decreasing sequence. Let $x_1^n, \dots, x_{k_n}^n \in X$ and $Y_1^n, \dots, Y_{k_n}^n \subset X$ be two finite families witnessing $O_{n + 1} \sqsubseteq O_n$ as in \eqref{eq:sievesym}. Thanks to the first inclusion in \eqref{eq:sievesym}, we may always assume, up to relabelling the indices, that, for $n < m$, we have both
	\begin{itemize}
		\item $k_n \leq k_m$;
		\item for $i \leq k_n$, $x_i^n = x_i^m$ and $Y_i^m \subset Y_i^n$.
	\end{itemize}
	Let us then simply write $x_i$ for $x_i^n$ as soon as $i \leq k_n$ and define $Y_i = \bigcap_{n:\ i \leq k_n} Y_i^n$. Each $Y_i$ is nonempty since it is the decreasing intersection of nonempty finite sets. Let then $F$ be the set of all $g \in \Sym(X)$ such that $g(x_i) \in Y_i$ for all $i$. Obviously, $F$ is contained in each $O_n$, let us then prove that it is nonempty.

	For any finite subset of indices $J$, there exists $n$ such that $J \subseteq \{1, \dots, k_n\}$ and $Y_i = Y_i^n$ for all $i \in J$. Therefore, there exists $g \in \Sym(X)$ such that $g(x_i) \in Y_i$ for all $i \in J$, namely any $g$ in $O_{n + 1}$. This proves that the selection problem defining $F$ satisfies Hall's condition, hence we can find an injective map $h$ from $\{x_i\ |\ i \in \Nat\}$ to $X$ such that $h(x_i) \in Y_i$ for all $i$ \cite{Hall_1948}. Since $X$ is uncountable, such an injective map can be extended to a bijection of $X$, which then belongs to $F$. This concludes the proof that $\sqsubseteq$ defines a sieve on $\Sym(X)$.
\end{proof}

\section{Regular trees of uncountable degree}\label{app:tree}

	Let us consider the following two properties of a topological group $G$, both enjoyed by $\Homeo(\omega_1)$:
	\begin{itemize}
		\item[\textbf{\propum}] Any continuous morphism from $G$ to a metrisable group is trivial.
		\item[\textbf{(OB)}] Any continuous isometric action of $G$ on a metric space has bounded orbits.
	\end{itemize}
	As we observed after Theorem~\ref{th:propum}, property \propum\ implies that any continuous isometric action of $G$ on a \emph{separable} metric space is trivial. It is then tempting to conjecture a link between properties \propum\ and (OB)---yet there is none. On the one hand, many metrisable (or indeed discrete) groups enjoy property (OB), see e.g.~\cite{Bergman_2006, Cornulier_2006, Rosendal_2009}.

	On the other hand, let $T$ be a regular tree of uncountable degree. (For instance, we can take as vertex set the set of all finite words in an uncountable alphabet $A$, and two vertices are joined by an edge if one is obtained from the other by adding a single letter at the end of the word.) Endowed with the path-metric, the vertex set $V(T)$ of $T$ is a nonseparable metric space. The topology of pointwise convergence on $V(T)$ is compatible\footnote{For any of the following reasons: $V(T)$ is topologically discrete (balls of radius $<1$ are singletons); a tree is a first-order structure; or $\Aut(T)$ is equicontinuous since it acts by isometries.} with the group structure of the automorphism group $\Aut(T)$ and makes the natural action on $V(T)$ continuous.

	Let $G = \Aut^+ (T)$ be the subgroup of $\Aut(T)$ generated by the fixators of edges. It does not enjoy property~(OB) since it has unbounded orbits in $T$ (indeed, it has only two orbits). Let us show that $G$ enjoys property \propum. By \cite[4.5]{Tits_1970}, $G$ is abstractly simple, hence a fortiori topologically simple. Therefore, we only need to show that there is no \emph{injective} continuous morphism from $G$ to a metrisable group. But such a morphism would allow to write its kernel, namely a singleton, as a countable intersection of open sets. Since a basis of identity neighbourhoods of $G$ is given by fixators of finitely many points, such an intersection contains the fixator of countably many points, which is a nontrivial subgroup since any vertex of $T$ has uncountably many neighbours. Hence a continuous morphism from $G$ to a metrisable group cannot be injective and thus $G$ enjoys property \propum.

\bibliographystyle{../../BIB/amsalpha}
\bibliography{../../biblio/biblio}

\end{document}